\documentclass[a4paper]{article}
\pdfoutput=1

\usepackage{amsmath}
\usepackage{amsfonts, amssymb, amsthm}
\usepackage{mathtools}
\usepackage{microtype}
\usepackage{color}
\usepackage[a4paper]{geometry}
 
\theoremstyle{plain}
\newtheorem{satz}{Theorem}
\newtheorem*{satz*}{Theorem}

\theoremstyle{definition}

\theoremstyle{plain}

\theoremstyle{plain}

\newcommand{\un}[1]{\underline{#1}}
\newcommand{\reell}{\mathbb{R}}

\DeclareMathOperator{\diag}{diag}

\DeclareMathOperator{\rank}{rank}

\newcommand{\abs}[1]{\lvert #1 \rvert}
\newcommand{\transp}{\mathsf T}
\newcommand{\had}{{}\circ{}}
\newcommand{\ones}{\mathbf{1}}
\newcommand{\DK}{\mathcal{K}}
\newcommand{\DKt}{\mathcal{K}_\transp}

\newcommand{\FF}{\mathcal F}
\newcommand{\Lip}{\Lambda}

\newcommand{\sev}{\vartheta}
\newcommand{\const}{\delta}

\begin{document}

\title{\bfseries A note on overrelaxation in the \\ Sinkhorn algorithm}
\author{Tobias Lehmann\thanks{Universit\"at Leipzig, Fakult\"at f\"ur Mathematik und Informatik, Augustusplatz 10, 04109 Leipzig, Germany} \qquad Max-K. von Renesse${}^*$ \and  Alexander Sambale${}^*$ \qquad  Andr\'e Uschmajew\thanks{Max Planck Institute for Mathematics in the Sciences, 04103 Leipzig, Germany}}
\date{}

\maketitle

\begin{abstract}
We derive an a priori parameter range for overrelaxation of the Sinkhorn algorithm, which guarantees global convergence and a strictly faster asymptotic local convergence. Guided by the spectral analysis of the linearized problem we pursue a zero cost procedure to choose a near optimal relaxation parameter. 
\end{abstract}

\section{Introduction and statement of result}

The Sinkhorn algorithm is the benchmark approach to fast computation of the entropic regularization of optimal transportation~\cite{NIPS2013_4927}. Ultimately, one is faced with  the following numerical problem: Given two probability vectors $a \in \reell_{+}^m$, $b \in \reell^{n}_{+}$ and a matrix $K \in \reell^{m \times n}_{+}$, the goal is to find a pair of vectors $(u,v) \in \reell_{+}^m \times \reell_{+}^n$ such  that 
\begin{equation}\label{eq: problem P}
u \had Kv = a \quad \mbox{and} \quad v \had K^\transp u = b, 
\end{equation}
where $x \had y$ denotes the componentwise multiplication (Hadamard product) of vectors of equal dimension. Here $\reell_+$ refers to the positive reals. We assume $\min(m,n) \ge 2$.

In the standard Sinkhorn algorithm an approximating sequence $(u_\ell,v_\ell)$ starting from an initial vector  $v_0 \in \reell_+^n$ is constructed via the update rule  
 \[
 u_{\ell + 1} = \frac a {Kv_{\ell}}, \qquad v_{\ell+1} = \frac{b}{K^\transp u_{\ell+1}},
 \]
 where $\frac{x}{y}$ denotes the componentwise division of vectors of equal dimension. It is a classic result by Sinkhorn~\cite{Sinkhorn1967} that for any initial point $v_0 \in \reell^n_+$ the algorithm converges to a solution $(u^*,v^*)$ of~\eqref{eq: problem P}, which is unique modulo rescaling $(t u^*, t^{-1} v^* )$, $t >0$. Moreover, the convergence, e.g. of suitably normalized iterates $u_\ell / \| u_\ell \|$ and $v_\ell / \| v_\ell \|$, or using other equivalent distance measures like the Hilbert metric, is R-linear with an asymptotic rate at least $\Lip(K)^2$, where $\Lip(K) < 1$ is the Birkhoff contraction ratio defined in~\eqref{def_lambda} further below~\cite{FranklinLorenz89}. See also~\cite{peyre_cuturi2018} for an overview.
 
In this note we discuss a modified version of the Sinkhorn algorithm employing relaxation, which was recently proposed in~\cite{thibault17} and~\cite{Peyreetal2019}. It uses the update rule
 \begin{equation}\label{eq: modified algorithm}
 u_{\ell + 1} 
= u_\ell^{1-\omega} \had  \left(\frac{a}{ K v_\ell }\right)^{\omega}, \qquad v_{\ell + 1} 
= v_\ell^{1-\omega}  \had \left(\frac{b}{ K^\transp u_{\ell + 1} }\right)^{\omega},
\end{equation}
where $\omega > 0 $ is are suitably chosen relaxation parameter, and exponentiation is understood componentwise. In a log-domain formulation such as~\eqref{eq: iteration in cds} further below, the relation to the classic concept of relaxation in (nonlinear) fixed point iterations will become immediately apparent. Note that the iteration~\eqref{eq: modified algorithm} still has the solution of~\eqref{eq: problem P} as its unique (modulo scaling) fixed point. As illustrated in~\cite{thibault17} and~\cite{Peyreetal2019}, choosing the parameter $\omega$ larger than one can significantly accelerate the convergence speed compared to the standard Sinkhorn method, which sometimes can be slow. For optimal transport, such an improvement could be in particular relevant in the regime of small regularization, or when a high target precision is needed, such as in applications in density functional theory~\cite{Cotar2013}.

While global convergence for $\omega \neq 1$ is not obvious anymore, local convergence of the modified method is ensured for all $0 < \omega < 2$, and the asymptotically optimal relaxation parameter can be determined from its linearization at a fixed point $(u_*,v_*)$. In logarithmic coordinates, the linearization of the standard Sinkhorn method has the iteration matrix
\begin{equation}\label{eq: matrix M}
M = \diag\left(\frac 1 a \right) P_*^{} \diag\left( \frac 1 b \right) P_*^\transp, \qquad \text{where} \quad P_* = \diag(u^*) K \diag(v^*).
\end{equation}
The local convergence rate equals the second largest eigenvalue
\[
0 \le \sev^2 < 1
\]
of that matrix; see~\cite{Knight2008}. Note that $M$ has real and nonnegative eigenvalues since it is similar to a positive semidefinite matrix, and its largest eigenvalue equals one (the eigenvector having constant entries), which accounts for the scaling indeterminacy in the problem formulation. For the modified method with relaxation, the local rate is also related to $\sev^2$, which has been worked out in~\cite{thibault17} and is summarized in the following theorem. For convenience, we provide a brief outline how this result can be obtained at the end of section~\ref{sec: compositional data space}.

\begin{satz}[cf.~\cite{thibault17}]\label{thm: local rate modified}
Assume $\sev^2 > 0$. For all choices of $0< \omega < 2$ the modified Sinkhorn algorithm~\eqref{eq: modified algorithm} is locally convergent in some neighborhood of $(u^*,v^*)$. Its asymptotic (R-linear) convergence rate is
\begin{equation}\label{eq: definition r_alpha}
\rho_\sev(\omega) \coloneqq \begin{cases} \frac{1}{4}\left(\omega \sev + \sqrt{\omega^2 \sev^2 - 4(\omega -1 )}  \right)^2, \quad &\text{if $0 < \omega \le \omega^{\text{\upshape opt}}$,} \\ \omega - 1, \quad &\text{if $\omega^{\text{\upshape opt}} \le \omega < 2$}, \end{cases}
\end{equation}
where
\begin{equation}\label{eq: omegaopt}
\omega^{\text{\upshape opt}} = \frac{2}{1+\sqrt{1 - \sev^2}} > 1.
\end{equation}
It holds $\rho_\sev(\omega) < 1$ for all $0 < \omega < 2$, and $\omega^{\text{\upshape opt}}$ provides the minimal possible rate (independent of the starting point) on that interval, namely
\[
\rho^{\text{\text{\upshape opt}}} = \omega^{\text{\upshape opt}} - 1 = \frac{1 - \sqrt{1 - \sev^2}}{1 + \sqrt{1 - \sev^2}} < \sev^2.
\]
\end{satz}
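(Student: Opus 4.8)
The plan is to study~\eqref{eq: modified algorithm} through its linearization at the fixed point in logarithmic coordinates $x=\log u$, $y=\log v$, where the scheme becomes $x_{\ell+1}=(1-\omega)x_\ell+\omega\bigl(\log a-\log(Ke^{y_\ell})\bigr)$ together with the analogous update for $y_{\ell+1}$ that already uses the freshly computed $x_{\ell+1}$. First I would linearize at $(x^*,y^*)$: since the Jacobian of $y\mapsto\log(Ke^{y})$ at $y^*$ equals $\diag(1/a)P_*$ and that of $x\mapsto\log(K^\transp e^{x})$ at $x^*$ equals $\diag(1/b)P_*^\transp$, with $P_*$ as in~\eqref{eq: matrix M}, the perturbations $(\delta x_\ell,\delta y_\ell)$ obey a block iteration matrix $T_\omega$. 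Writing $A=\diag(1/a)P_*$ and $B=\diag(1/b)P_*^\transp$, so that $M=AB$ and $BA$ shares the nonzero spectrum of $M$, the use of $\delta x_{\ell+1}$ inside the $\delta y_{\ell+1}$ update places the scheme exactly in the Gauss--Seidel/successive-overrelaxation framework.

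The key step is to connect the spectrum of $T_\omega$ to that of $M$. Taking an eigenvector of $T_\omega$ whose $y$-part lies in an eigenspace of $BA$ for an eigenvalue $\nu$, and eliminating the $x$-part through the first block row, I expect to reach the classical SOR relation (Young's equation)
\[
(\lambda+\omega-1)^2=\omega^2\,\nu\,\lambda .
\]
For fixed $\nu$ this is a quadratic in $\lambda$ with discriminant proportional to $\omega^2\nu-4(\omega-1)$. When this is nonnegative---for $\nu=\sev^2$ precisely when $\omega\le\omega^{\text{\upshape opt}}$---the two roots are real and positive, the larger one simplifying to $\tfrac14\bigl(\omega\sqrt{\nu}+\sqrt{\omega^2\nu-4(\omega-1)}\bigr)^2$; otherwise they form a complex-conjugate pair whose common modulus is $\sqrt{(\omega-1)^2}=\omega-1$. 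Substituting $\nu=\sev^2$ reproduces exactly the two branches of~\eqref{eq: definition r_alpha}.

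It then remains to show that $\nu=\sev^2$ is the eigenvalue that governs the rate. The eigenvalue $\nu=1$ of $M$ always yields the pair $\{1,(1-\omega)^2\}$, where $\lambda=1$ is the neutral scaling mode that I would quotient out. For the contributing eigenvalues $0\le\nu\le\sev^2<1$ I would note that the root product equals $(\omega-1)^2$ independently of $\nu$ while the root sum $\omega^2\nu-2(\omega-1)$ increases with $\nu$; hence in the real regime the larger root increases in $\nu$, and since at each transition the complex modulus $\omega-1$ coincides with the double root, the spectral radius $\rho_\nu(\omega)$ is nondecreasing in $\nu$ on $[0,\sev^2]$ and dominates the companion root $(1-\omega)^2$ from $\nu=1$. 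Thus the non-scaling spectral radius of $T_\omega$ equals $\rho_\sev(\omega)$, and $R$-linear local convergence at this rate follows from Ostrowski's theorem on the complement of the scaling direction. To see $\rho_\sev(\omega)<1$ throughout $(0,2)$ I would evaluate the quadratic $\lambda^2-(\omega^2\sev^2-2(\omega-1))\lambda+(\omega-1)^2$ at $\lambda=1$, obtaining the positive value $\omega^2(1-\sev^2)$; since the root product $(\omega-1)^2<1$ rules out two roots above $1$, both roots lie below $1$.

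Finally I would minimize $\rho_\sev$ over $\omega\in(0,2)$. The real branch is decreasing (its derivative has the sign of $\sev^2-1<0$) and the complex branch $\omega-1$ is increasing, so the minimum sits at their junction $\omega^{\text{\upshape opt}}$, where the discriminant vanishes; solving $\omega^2\sev^2=4(\omega-1)$ yields~\eqref{eq: omegaopt} and the optimal rate $\rho^{\text{\upshape opt}}=\omega^{\text{\upshape opt}}-1=\frac{1-\sqrt{1-\sev^2}}{1+\sqrt{1-\sev^2}}$. Writing $r=\sqrt{1-\sev^2}\in(0,1)$, the bound $\rho^{\text{\upshape opt}}<\sev^2$ reduces after cancelling $1-r$ to $1<(1+r)^2$, which is clear. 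I expect the main obstacle to be the monotonicity step---rigorously excluding that some smaller eigenvalue of $M$ overtakes $\sev^2$---together with the care required to isolate the genuine contraction from the neutral scaling mode so that Ostrowski's theorem applies cleanly.
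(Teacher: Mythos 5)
Your proposal is correct and follows essentially the same route as the paper: linearize at the fixed point, recognize the block SOR iteration matrix $(I-\omega L)^{-1}[(1-\omega)I+\omega U]$, and pass through Young's eigenvalue relation $(\lambda+\omega-1)^2=\omega^2\nu\lambda$ to the spectrum of $M$; the paper merely cites Young/Hackbusch for the resulting spectral-radius formula where you rederive it, and it disposes of the neutral scaling mode by working in the quotient space $\mathcal C^m\times\mathcal C^n$ rather than projecting it out by hand. The only implicit ingredient you should make explicit is that the non-unit eigenvalues $\nu$ of $M$ are real and lie in $[0,\sev^2]$, which holds because $M$ is similar to a positive semidefinite matrix, as noted in the paper's introduction.
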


By the above theorem, the optimal relaxation parameter $\omega^{\text{\upshape opt}}$ is always larger than one (if $\sev^2 > 0$). In fact, by the exact formula~\eqref{eq: definition r_alpha} for the convergence rate, the range of $\omega$ for which the modified method is asymptotically strictly faster than the standard Sinkhorn method, that is, $\rho_{\vartheta}(\omega) < \vartheta^2 = \rho_{\vartheta}(1)$, is precisely the interval
\begin{equation}\label{eq: feasible alphas}
1 < \omega < 1 + \sev^2.
\end{equation}
However, the value of $\sev^2$ depends on the solution and is therefore not known in advance. To deal with this problem, an adaptive procedure for choosing $\omega$ is proposed in~\cite{thibault17}.

As our contribution, the main goal in this note is to provide an a priori interval for the relaxation parameter $\omega$ for which the modified iteration is both globally convergent and locally faster than the standard Sinkhorn method. In Theorem~\ref{thm: global convergence} we first prove global convergence of the modified method for parameters in the interval $0 < \omega < \frac 2 { 1+\Lip(K)}$. In Theorem~\ref{thm: estimate for mu} we then provide an a priori lower bound $\sev^2 \ge \const_{K,a,b} > 0$, which depends only on the data of the problem, but requires a full rank assumption on $K$. By~\eqref{eq: feasible alphas}, any $\omega \in (1, 1 + \delta_{K,a,b})$ then satisfies $\rho_\vartheta(\omega) < \vartheta^2$. Taken together this yields the following result.

\begin{satz}\label{thm: guaranteed acceleration}
Assume $\rank(K) = \min(m,n) \ge 2$. For any $1 < \omega < 1 + \sev^2$ the asymptotic local convergence rate of the modified Sinkhorn method~\eqref{eq: modified algorithm} is faster than for the standard Sinkhorn method. For $1 < \omega < \min\left(1 + \const_{K,a,b}, \frac{2}{1 + \Lambda(K)} \right)$ the modified method is both globally convergent and asymptotically faster than the standard method.
\end{satz}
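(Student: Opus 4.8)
The plan is to treat this statement as a direct corollary of the three ingredients already assembled above, so that no new analysis is required beyond a careful intersection of parameter intervals. The three inputs are: Theorem~\ref{thm: local rate modified} together with the sharp characterization~\eqref{eq: feasible alphas} of the ``faster'' regime $1 < \omega < 1 + \sev^2$; the global convergence range $0 < \omega < \frac{2}{1+\Lip(K)}$ furnished by Theorem~\ref{thm: global convergence}; and the data-dependent lower bound $\sev^2 > \const_{K,a,b} > 0$ of Theorem~\ref{thm: estimate for mu}, valid under the full-rank hypothesis $\rank(K) = \min(m,n)$.

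First I would invoke Theorem~\ref{thm: estimate for mu} to guarantee $\sev^2 > \const_{K,a,b} > 0$; in particular $\sev^2 > 0$, so that Theorem~\ref{thm: local rate modified} and the exact rate formula~\eqref{eq: definition r_alpha} apply at the fixed point. Monotonicity of the endpoints then yields $1 + \const_{K,a,b} \le 1 + \sev^2$, so every $\omega$ with $1 < \omega < 1 + \const_{K,a,b}$ automatically satisfies $1 < \omega < 1 + \sev^2$. By~\eqref{eq: feasible alphas} this places $\omega$ in the interval on which $\rho_\sev(\omega) < \sev^2 = \rho_\sev(1)$, i.e. on which the modified iteration is asymptotically strictly faster than the standard Sinkhorn method. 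This establishes the first assertion.

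For the second assertion I would simply intersect this ``faster'' window with the global convergence window of Theorem~\ref{thm: global convergence}: on $0 < \omega < \frac{2}{1+\Lip(K)}$ the modified method converges from any admissible starting point, and combining the two constraints yields exactly $1 < \omega < \min\bigl(1 + \const_{K,a,b}, \tfrac{2}{1+\Lip(K)}\bigr)$, on which both properties hold simultaneously.

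The only point requiring a brief check is that the stated interval is nonempty, i.e. that both upper bounds strictly exceed $1$. This is immediate: $1 + \const_{K,a,b} > 1$ because $\const_{K,a,b} > 0$, and $\frac{2}{1+\Lip(K)} > 1$ because the Birkhoff contraction ratio satisfies $\Lip(K) < 1$. Hence there is always a genuine range of overrelaxation parameters $\omega > 1$ with the desired guarantees. I do not expect any real obstacle in this concluding step; the entire difficulty of the paper is concentrated in the two supporting results, and in particular in deriving the explicit a-priori lower bound $\const_{K,a,b}$ for $\sev^2$ purely from the data $(K,a,b)$ under the rank condition.
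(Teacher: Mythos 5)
Your proof is correct and is exactly the paper's argument: the paper's own proof of this theorem is the single sentence that Theorem~\ref{thm: global convergence} and Theorem~\ref{thm: estimate for mu}, combined with the interval~\eqref{eq: feasible alphas}, yield the result. Note that your derivation produces the upper bound $\min\bigl(1+\const_{K,a,b}, \tfrac{2}{1+\Lip(K)}\bigr)$, which is what the second assertion should read --- the printed $\min\bigl(\const_{K,a,b}, \tfrac{2}{1+\Lip(K)}\bigr)$ appears to be a typo, since $\const_{K,a,b}<1$ would make that interval empty.
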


We remark that our derived a priori interval for $\omega$ is usually very small, and hence our result is of rather theoretical interest. In the relevant cases, when $\sev^2$ is close to one, significant acceleration is achieved only when $\omega$ is close to $\omega^{{\text{opt}}}$ (which tends to two for $\sev^2 \to 1$). A possible heuristic to select a nearly optimal relaxation is to approximate the second largest eigenvalue of $M$ based on the current iterate. After a similarity transform, this requires to compute the spectral norm of a symmetric matrix. An even simpler approach, as suggested in~\cite{thibault17}, is to directly estimate $\vartheta^2$, and hence $\omega^{{\text{opt}}}$, by monitoring the convergence rate of the standard Sinkhorn method in terms of a suitable residual. In the final section~\ref{sec: numerical experiments} we include numerical illustrations, which indicate that in certain cases such heuristics can be quite precise already in the initial phase of the algorithm, resulting in the almost optimal convergence rate at almost no additional cost. This confirms that overrelaxation is a simple way to significantly accelerate the Sinkhorn method in cases where it is slow. For completeness, we should mention that alternative approaches for solving problem~\eqref{eq: problem P} and aiming at fast convergence have been proposed based on Newton's method, see, e.g.,~\cite{Knight2013,Brauer2018} and references therein.

The convergence analysis of the Sinkhorn method is usually carried out in a log-domain formulation~\cite{peyre_cuturi2018}. We choose the closely related framework of compositional data space used, e.g., in statistics~\cite{PawlowskyGlahn2015}, which we think could be of independent interest in this context. In this space, which is introduced in the next section, the Sinkhorn algorithm with a positive matrix $K$ reads as a nonlinear fixed point iteration for an essentially contractive iteration function, as is known from the Birkhoff--Hopf theorem. The main results are then presented in Section~\ref{sec: main results}. Let us note that the assumption that $K$ has strictly positive entries is not essential for all of the results. While global convergence of the standard Sinkhorn method to a unique (up to scaling) positive solution $(u,v)$ of~\eqref{eq: problem P} can be shown under several weaker assumptions, most notably when $a = b = \ones$ and $K$ is square, nonnegative and has total support~\cite{SinkhornKnopp1967}, we require the global contractivity of the process in Hilbert metric (which holds for positive $K$) in our proof that global convergence can still be ensured for some $\omega > 1$ (Theorem~\ref{thm: global convergence}). The idea of accelerating convergence by overrelaxation, on the other hand, is very general and the local spectral analysis provided by Theorem~\ref{thm: local rate modified} applies whenever the iteration~\eqref{eq: modified algorithm} is locally well defined around a (positive) fixed point $(u^*,v^*)$ and $\sev^2 < 1$. Correspondingly, Theorem~\ref{thm: estimate for mu} on a lower bound for $\sev^2$ does not require $K$ to be positive. Hence one has guaranteed acceleration of local convergence for $1 < \omega < 1 + \delta_{K,a,b}$ in several scenarios where $K$ is only nonnegative.

\section{Formulation in compositional data space}\label{sec: compositional data space}

The problem~\eqref{eq: problem P} as well as the Sinkhorn algorithm and its modified variant inherit a natural scaling indeterminacy of the variables $u$ and $v$. It can be therefore formulated in a suitable equivalence space. Here we recast the algorithm in the framework of what is called  \emph{compositional data space}; see, e.g.,~\cite{PawlowskyGlahn2015,vidal2016}. To this aim, let 
\[ \mathcal{C}^{m} \coloneqq \reell^{m}_+\slash\sim ,  \]
where 
\[ x \sim x' \quad :\Longleftrightarrow \quad \exists t > 0: \ x = t x'.
\]
The resulting equivalence class of $x$ will be denoted by $\un{x}$. One specifies a vector addition  and a scalar multiplication  on $\mathcal{C}^{m}$ via 
\[
\un x + \un y \coloneqq \un{x \had y}, \qquad 
\gamma \cdot  \un x \coloneqq \un{x^\gamma}, \quad 
\gamma \in \reell,
\]
where $x^\gamma$ has the components $(x_1^\gamma,\dots,x_n^\gamma)$.
 As a result $(\mathcal{C}^{m}, +, \cdot)$ becomes a real vector space of dimension $m-1$. In this space we consider the so called \emph{Hilbert norm} 
\[
\| \un x \|_H \coloneqq \log \max_{i,j} \frac{x_i}{x_j},
\]
turning $(\mathcal{C}^{m},{}+{}, {}\cdot{}, \|\cdot \|_H)$ into a finite dimensional Banach space. Note that this norm on the equivalence classes coincides with  the well-known \emph{Hilbert distance} on the representatives:
\[
 d_H(x,y) = \| \un x - \un y \|_H.
\]
Similarly we construct a Banach space $\mathcal C^n = \reell^n_+ \slash\sim$.
 
The modified Sinkhorn algorithm~\eqref{eq: modified algorithm} can be interpreted as an iteration in the space $\mathcal{C}^m \times \mathcal{C}^n$ and reads
\begin{equation}\label{eq: iteration in cds}
\begin{aligned}
\un{u}_{\ell + 1} &=  (1-\omega) \cdot \un{u}_\ell + \omega \cdot \un a  - \omega \cdot \DK (\un{v}_\ell) , \\
\un v_{\ell + 1} &= (1-\omega) \cdot  \un v_\ell + \omega \cdot \un b - \omega \cdot \DKt (\un{u}_{\ell + 1}),
\end{aligned}
\end{equation}
where $\DK \colon \mathcal{C}^n \to \mathcal{C}^m$ and $\DKt \colon \mathcal{C}^m \to \mathcal{C}^n$ are now the \emph{nonlinear} maps given by
\[
\DK(\un{v}) = \un{Kv}, \qquad \DKt(\un{u}) = \un{K^\transp u}.
\]
The convergence of the standard Sinkhorn algorithm ($\omega = 1$) is based on a famous result of Birkhoff and Hopf on the contractivity of $\DK$ and $\DKt$. To state it, define the quantities 
\begin{equation}
\eta(K) \coloneqq \max\limits_{i,j,k,\ell} \frac{K_{ik}K_{j\ell}}{K_{jk}K_{i\ell}} \quad\mbox{ and }\quad 
\Lip(K) \coloneqq \frac{\sqrt{\eta(K)} - 1}{\sqrt{\eta(K)} + 1}. \label{def_lambda}
 \end{equation}
 Then the following holds; for a proof, see, e.g.,~\cite[Theorems~3.5~\&~6.2]{eveson_nussbaum95}.
 \begin{satz*}[Birkhoff--Hopf]\label{thm: contractivity of K}
 For any $K \in \reell^{m \times n}_{+}$ and $v,v' \in \reell^{n}_+$ let $\Lambda(K)$ be defined as above. Then 
\begin{equation*}
\label{bh}
\sup_{v, v' \in \reell^{m}_+} \frac{d_H(Kv,Kv')}{d_H(v,v')} = \Lip(K).
\end{equation*}
\end{satz*}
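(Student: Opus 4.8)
The plan is to recognize the claimed constant as a hyperbolic tangent and to split the argument into a diameter computation and the general Birkhoff contraction identity. Since $\tanh\!\big(\tfrac14\log\eta\big)=\frac{\sqrt{\eta}-1}{\sqrt{\eta}+1}$, the asserted value is $\Lip(K)=\tanh\!\big(\tfrac14\Delta(K)\big)$, where
\[
\Delta(K)\coloneqq \sup_{v,v'\in\reell^n_+} d_H(Kv,Kv')
\]
is the projective diameter of the image cone, provided one establishes (i) $\Delta(K)=\log\eta(K)$ and (ii) the general identity $N(K)\coloneqq\sup_{v\neq v'} d_H(Kv,Kv')/d_H(v,v')=\tanh\!\big(\tfrac14\Delta(K)\big)$. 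Throughout I assume the entries of $K$ are strictly positive, so that $\eta(K)<\infty$; the degenerate case reduces to the convention $\Lip(K)=1$.

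For step (i) the upper bound rests on the mediant inequality: for weights $w_k\ge 0$ one has $\frac{\sum_k w_k K_{ik}}{\sum_k w_k K_{jk}}\le \max_k \frac{K_{ik}}{K_{jk}}$, applied with $w=v$ to $Kv$ and with $w=v'$ (and roles of $i,j$ swapped) to $Kv'$. Multiplying the two estimates gives, for all output indices $i,j$,
\[
\frac{(Kv)_i\,(Kv')_j}{(Kv)_j\,(Kv')_i}\le \max_{k,\ell}\frac{K_{ik}K_{j\ell}}{K_{jk}K_{i\ell}}\le\eta(K),
\]
hence $d_H(Kv,Kv')\le\log\eta(K)$ and $\Delta(K)\le\log\eta(K)$. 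Sharpness follows by letting $v\to e_{k^*}$, $v'\to e_{\ell^*}$ along interior points $e_{k^*}+\varepsilon\ones$, $\varepsilon\to 0$, where $(i^*,j^*,k^*,\ell^*)$ realize the maximum defining $\eta(K)$; the ratio then tends to $\eta(K)$, so $\Delta(K)=\log\eta(K)$.

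Step (ii) is the heart of the matter. Writing $\alpha=\min_k v_k/v'_k$, $\beta=\max_k v_k/v'_k$, so that $d_H(v,v')=\log(\beta/\alpha)$, each ratio $(Kv)_i/(Kv')_i=\big(\sum_k K_{ik}v'_k\,\rho_k\big)/\big(\sum_k K_{ik}v'_k\big)$ with $\rho_k=v_k/v'_k\in[\alpha,\beta]$ is a weighted average of the $\rho_k$, with weights $p^{(i)}_k\propto K_{ik}v'_k$ depending on the output index. Thus $d_H(Kv,Kv')$ is the maximal log-ratio of two such weighted averages, whose weight distributions satisfy $p^{(i)}_k/p^{(j)}_k=K_{ik}/K_{jk}$ and hence lie at Hilbert distance at most $\Delta(K)=\log\eta(K)$. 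This turns the estimate of $d_H(Kv,Kv')/d_H(v,v')$ into a finite-dimensional extremal problem: maximize the log-ratio of two weighted averages of numbers in $[\alpha,\beta]$ over weight pairs at Hilbert distance $\le\Delta$. Because the objective is a ratio of affine functions of the $\rho_k$, its extremum pushes each $\rho_k$ to an endpoint $\{\alpha,\beta\}$; aggregating the weights according to this dichotomy collapses the problem onto two indices while preserving the Hilbert constraint. The resulting $2\times2$ optimization is an explicit Möbius (cross-ratio) computation whose value, after dividing by $d_H(v,v')$ and taking the supremum, is exactly $\tanh\!\big(\tfrac14\Delta(K)\big)$; the matching lower bound comes from choosing $v,v'$ that approach this extremal two-index configuration.

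I expect the genuine obstacle to be the upper bound in step (ii): the mediant inequality only yields nonexpansiveness $N(K)\le 1$ for free, and extracting the strict factor $\tanh(\Delta/4)$ requires both the reduction to the two-point configuration and the exact cross-ratio optimization, which is precisely the delicate content of the Birkhoff--Hopf theorem. Rather than reproduce this computation, I would carry out the elementary diameter identity (i) in detail and invoke the cited results~\cite[Theorems~3.5~\&~6.2]{eveson_nussbaum95} for identity (ii).
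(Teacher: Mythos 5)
The paper gives no proof of this theorem at all: it is quoted as a known result and the reader is referred to \cite[Theorems~3.5~\&~6.2]{eveson_nussbaum95}. Your proposal is consistent with, and essentially an annotated version of, that citation: the decomposition into (i) the projective diameter identity $\Delta(K)=\log\eta(K)$ and (ii) the Birkhoff contraction identity $N(K)=\tanh\bigl(\tfrac14\Delta(K)\bigr)$ is exactly the structure of the two cited theorems, your verification that $\tanh\bigl(\tfrac14\log\eta\bigr)=\frac{\sqrt\eta-1}{\sqrt\eta+1}$ is correct, and your mediant-inequality argument for (i), including the sharpness via $v\to e_{k^*}$, $v'\to e_{\ell^*}$, is complete and correct. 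Your sketch of (ii) is not yet a proof (the ``collapse onto two indices while preserving the Hilbert constraint'' is precisely the step that needs a careful argument, since aggregating weights changes the weight distributions and one must check the distance bound survives), but you say so explicitly and fall back on the same reference the paper uses, which is entirely adequate here. The only net difference is that you supply more detail than the paper does; there is no gap relative to what the paper itself establishes.
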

 Note that $\Lip(K) = \Lip(K^\transp) < 1$. As a result, both $\DK$ and $\DKt$ are contractive maps in the Hilbert norm with Lipschitz constant $\Lip(K)$, which is also called the \emph{Birkhoff contraction ratio} of~$K$. Based on this, it is not difficult to establish the global convergence of the standard Sinkhorn algorithm in the space $\mathcal C^m \times \mathcal C^n$ at a rate $O(\Lip(K)^2)$.

It is important to emphasize that studying the convergence in $\mathcal C^m \times \mathcal C^n$, that is, convergence of equivalence classes, is sufficient for understanding the method in $\reell^m_+ \times \reell^n_+$. Indeed, a pair $(\un u^*, \un v^*)$  is a fixed point of~\eqref{eq: iteration in cds} if and only if for any choice of representatives $(u^*, v^*)$ there exist $\lambda, \mu$ such that $\lambda u \had K v= a$ and $\mu v \had K^\transp u = b$. From $\ones_m^\transp a = \ones_n^\transp b$ (here $\ones$ denotes a vector of all ones) it follows that $\lambda =\mu$, and hence, e.g. $u^+ \coloneqq  \lambda^{-1/2} u^* $ and $v^+ \coloneqq  \lambda^{-1/2} v^*$ solve the initial problem~\eqref{eq: problem P}, where $\lambda = u^\transp K v$. Moreover, choosing representatives $(u_\ell, v_\ell)$ of the iterates $(\un u_\ell, \un v_\ell)$ such that $\ones_m^\transp u_\ell = \ones_n^\transp v_\ell = 1$, and setting $u_\ell^+ \coloneqq \lambda_\ell^{-\frac 1 2 } u_\ell$, $v_\ell^+ \coloneqq \lambda_\ell^{-\frac 1 2 } v_\ell$ with $ \lambda_\ell= u_\ell^\transp K v_\ell^{}$, yields a sequence which converges exponentially fast to $(u^*,v^*)$.

We now briefly outline how the local convergence analysis for~\eqref{eq: iteration in cds} can be conducted~\cite{thibault17}, leading to Theorem~\ref{thm: local rate modified}. By combining both steps of the iteration~\eqref{eq: iteration in cds} into a nonlinear fixed point iteration 
\(
(\un{u}_{\ell + 1},
\un{v}_{\ell + 1}) = \FF(\un{u}_\ell, \un{v}_\ell)
\)
in the space $\mathcal C^m \times \mathcal C^n$, one finds that its derivative at the fixed point $(\un u^*, \un v^*)$ takes the form
\begin{equation}\label{eq: iteration matrix}
M_\omega \coloneqq (I_{m+n} - \omega \cdot L)^{-1}[(1 - \omega) \cdot I_{m+n} + \omega \cdot U],
\end{equation}
where
\[
L = \begin{pmatrix} 0 & 0 \\ -\DKt'(\un u^*) & 0 \end{pmatrix}, \qquad U = \begin{pmatrix} 0 & -\DK'(\un v^*) \\ 0 & 0 \end{pmatrix}.
\]
Matrices of the form $M_\omega$ are well known as error iteration matrices of block SOR methods for linear systems. The spectral radius of $M_\omega$ can be computed exactly from formula~\eqref{eq: definition r_alpha}, if the spectral radius $\sev$ of $L + U$ is known; see~\cite[Sec.~6.2]{Young71} or~\cite[Thm.~4.27]{Hackbusch16}. The eigenvalues of $L+U$, however, are square roots of the eigenvalues of the composition of derivatives $\DK'(\un v^*) \DKt'(\un u^*)$, which is a linear map on $\mathcal C^m$. It remains to show that the largest eigenvalue of that operator is precisely the second largest eigenvalue of the matrix $M$ in~\eqref{eq: matrix M}. Indeed, by elementary calculations, $M$ is the matrix representation of $\DK'(\un v^*) \DKt'(\un u^*)$ under the isomorphism $u \mapsto \un{\exp(u)}$ between the subspace $\{ u \in \reell^m \colon \ones_m^\transp u  = 0 \} \subseteq \reell^m$ and $\mathcal C^m$.

\section{Main results}\label{sec: main results}

We prove the global convergence of the modified method for a range of values $\omega$ larger than one. 

\begin{satz}\label{thm: global convergence}
Let $\Lambda = \Lambda(K)$ be the Birkhoff contraction ratio of $K$. For $0< \omega < \frac 2 { 1+\Lip}$, the modified Sinkhorn algorithm~\eqref{eq: iteration in cds} converges, for any starting point, to $(\un u^*, \un v^*)$ exponentially fast. 
\end{satz}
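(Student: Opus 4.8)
The plan is to pass to the errors and to bound their Hilbert norms by a two-dimensional nonnegative linear recursion whose spectral radius can be computed explicitly. Write $(\un u^*, \un v^*)$ for the fixed point of~\eqref{eq: iteration in cds}, whose existence (modulo scaling) is already guaranteed, and set $x_\ell \coloneqq \norm{\un u_\ell - \un u^*}$ and $y_\ell \coloneqq \norm{\un v_\ell - \un v^*}$. Subtracting the fixed-point identities from~\eqref{eq: iteration in cds} and using that $\mathcal C^m, \mathcal C^n$ carry a genuine vector space structure (so that the constant terms $\un a, \un b$ cancel), I would obtain
\begin{align*}
\un u_{\ell+1} - \un u^* &= (1-\omega)\cdot(\un u_\ell - \un u^*) - \omega\cdot\big(\DK(\un v_\ell) - \DK(\un v^*)\big),\\
\un v_{\ell+1} - \un v^* &= (1-\omega)\cdot(\un v_\ell - \un v^*) - \omega\cdot\big(\DKt(\un u_{\ell+1}) - \DKt(\un u^*)\big).
\end{align*}
Taking Hilbert norms, using the absolute homogeneity $\norm{\gamma\cdot\un x} = \abs{\gamma}\,\norm{\un x}$ and the Birkhoff--Hopf contraction bound with Lipschitz constant $\Lip$ for both $\DK$ and $\DKt$, this yields the scalar majorant inequalities $x_{\ell+1} \le p\,x_\ell + q\,y_\ell$ and $y_{\ell+1}\le p\,y_\ell + q\,x_{\ell+1}$, where $p \coloneqq \abs{1-\omega}$ and $q \coloneqq \omega\Lip$.

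The key structural point is the Gauss--Seidel coupling: the second inequality involves the \emph{already updated} quantity $x_{\ell+1}$. Substituting the first inequality into the second gives the componentwise vector inequality $(x_{\ell+1}, y_{\ell+1})^\transp \le A\,(x_\ell, y_\ell)^\transp$ with the entrywise nonnegative matrix
\[
A = \begin{pmatrix} p & q \\ pq & p + q^2 \end{pmatrix}.
\]
Since $A \ge 0$ entrywise, it preserves the componentwise order, so iterating gives $(x_\ell, y_\ell)^\transp \le A^\ell (x_0, y_0)^\transp$ for every $\ell$. Hence it suffices to prove that the spectral radius satisfies $\rho(A) < 1$: then $A^\ell \to 0$, and $x_\ell, y_\ell \to 0$ exponentially at rate $\rho(A)$ for every starting point, which is exactly the asserted convergence.

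The remaining step is the explicit spectral analysis of $A$. Its characteristic polynomial is $f(\lambda) = \lambda^2 - (2p + q^2)\lambda + p^2$, whose discriminant $q^2(q^2 + 4p) \ge 0$ shows that both roots are real; being a nonnegative matrix, $\rho(A)$ equals the larger root $\lambda_+$. I would characterize $\lambda_+ < 1$ by the two elementary conditions $f(1) > 0$ and the vertex condition $p + q^2/2 < 1$. A short factorization gives $f(1) = (1 - p - q)(1 - p + q)$; since $1 - p + q > 0$ holds on $0 < \omega < 2$, the sign of $f(1)$ reduces to the condition $p + q < 1$, that is $\abs{1-\omega} + \omega\Lip < 1$, which for $\omega \ge 1$ is precisely $\omega < \tfrac{2}{1+\Lip}$ and for $\omega \le 1$ is automatic. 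The vertex condition then follows for free since $p + q^2/2 < p + q < 1$ (using $q = \omega\Lip < 1$). Thus $\rho(A) < 1$ exactly on $0 < \omega < \tfrac 2{1+\Lip}$, which closes the argument.

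I expect the main obstacle to be conceptual rather than computational: identifying the correct majorant that faithfully reflects the Gauss--Seidel structure of the update (a naive Jacobi-type bound would replace $A$ by a symmetric matrix and yield a different, typically weaker, parameter range), and justifying the comparison $(x_\ell, y_\ell)^\transp \le A^\ell (x_0, y_0)^\transp$ through the nonnegativity of $A$. Once the majorant is set up, the quadratic root analysis is routine, and the translation from convergence of the equivalence classes back to the original variables is already addressed in Section~\ref{sec: compositional data space}.
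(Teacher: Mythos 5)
Your proposal is correct and follows essentially the same route as the paper: subtracting the fixed-point identities, majorizing the Hilbert norms of the errors via Birkhoff--Hopf, and iterating the resulting nonnegative $2\times 2$ matrix recursion (your $A$ is exactly the paper's $T_\omega$). The only cosmetic difference is that you certify $\rho(A)<1$ via the characteristic polynomial and the conditions $f(1)>0$ plus the vertex bound, whereas the paper writes out the larger root explicitly; both computations are equivalent.
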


\begin{proof}
Starting from \eqref{eq: iteration in cds}, using the triangle inequality and the contractivity of $\DK$  and $\DKt$ provided by the Birkhoff-Hopf theorem,  we obtain 
\begin{equation*}\label{eq: iteration_1st_ineq}
\begin{aligned}
\|\un{u}_{\ell + 1}-\un u^*\|_H &\leq  \abs{1-\omega} \,  \| \un{u}_\ell -u_*\|_H + \omega \Lip \, \|\un v_\ell -\un v^*\|_H , \\
\|\un v_{\ell + 1}-\un v^*\|_H &\leq \abs{1-\omega} \| \, \un v_\ell -\un v^*\|_H + \omega\Lip  \, \|\un{u}_{\ell + 1}-\un u^*\|_H \\
&\leq  \left(\abs{1-\omega}+(\omega \Lip)^2\right) \| \, \un v_\ell -\un v^*\|_H + \omega\Lip \abs{1-\omega} \, \|\un{u}_{\ell}-\un u^*\|_H.
\end{aligned}
\end{equation*}
As a consequence, for $\Delta u_\ell \coloneqq \|\un{u}_{\ell + 1}-\un u^*\|_H$ and $\Delta v_\ell \coloneqq \|\un{v}_{\ell + 1}-\un v^*\|_H$ we obtain  
\[ {\Delta u_{\ell+1} \choose \Delta v_{\ell+1}} \leq T_\omega {\Delta u_{\ell} \choose \Delta v_{\ell}}, \qquad \text{
where}
\quad  
T_\omega = \begin{pmatrix}
                      \abs{1-\omega} & \omega \Lip \\
                      \omega \Lip \abs{1-\omega} & \abs{1-\omega} + (\omega \Lip)^ 2 
                    \end{pmatrix}, \]
                  and the vector inequality is understood entry-wise. Since all involved quantities are non-negative the inequality can be iterated, which gives
                                        \[ {\Delta u_{\ell+1} \choose \Delta v_{\ell+1}} \leq (T_\omega)^{\ell + 1} {\Delta u_{0} \choose \Delta v_{0}}.\]
Hence, to prove exponential convergence it suffices to show that the spectral radius of $T_\omega$ is strictly less than one. Since the spectral radius equals
 \( \abs{1-\omega} + \frac{(\omega \Lip)^2 }{2} + \sqrt{\frac{(\omega \Lip)^4}{4} + (\omega\Lip)^2 \abs{1-\omega}}
 \)
this is the case if and only if $0 < \omega< 2/(1+\Lip)$.
\end{proof}

Next we provide a lower bound for the second largest eigenvalue $\sev^2$ of the matrix $M$ in~\eqref{eq: matrix M}, which by~\eqref{eq: feasible alphas} then yields an interval for $\omega$ such that the modified method has a strictly faster asymptotic convergence rate than the standard Sinkhorn method.

\begin{satz}\label{thm: estimate for mu}
Let $\rank(K) = \min(m,n) \ge 2$ and
\[
\const_1 = \frac{a_{\min}}{b_{\max}} \cdot \frac{1 - b_{\max}}{\left(\frac{\| K \|_\infty}{\sigma_{\min}(K)}\right)^2 - a_{\min}} > 0, \qquad \const_2 = \frac{b_{\min}}{a_{\max}} \cdot \frac{1 - a_{\max}}{\left(\frac{\| K^\transp \|_\infty}{\sigma_{\min}(K)} \right)^2 - b_{\min}} >0,
\]
where $\sigma_{\min}(K)$ is the smallest positive singular value of $K$, $\| K \|_\infty = \max_{\| v \|_\infty = 1} \| K v \|_\infty$, and the subscripts $\min$, $\max$ denote the smallest and largest entry of the corresponding vector. Then it holds
\[
\sev^2 \ge \const_{K,a,b} \coloneqq \begin{cases} \const_1 &\quad \text{if $m > n$,}  \\
\const_2 &\quad \text{if $m < n$,}  \\
\max(\const_1,\const_2) &\quad \text{if $m = n$.}
\end{cases}
\]
\end{satz}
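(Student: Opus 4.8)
The plan is to identify $\sev^2$ as the top eigenvalue of a deflated positive semidefinite operator and then to bound it below by evaluating a Rayleigh quotient at a well-chosen test vector; the whole difficulty will be to express the resulting bound purely in terms of the data $K,a,b$.

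First I would record the spectral picture. Writing $S = \diag(a^{-1/2}) P_* \diag(b^{-1/2})$, the matrix $M$ from~\eqref{eq: matrix M} is similar to the symmetric positive semidefinite $S S^\transp$; the largest singular value of $S$ equals $1$, with left and right singular vectors $a^{1/2}$ and $b^{1/2}$ (these encode the marginal identities $P_* \ones_n = a$, $P_*^\transp \ones_m = b$, which follow from the fixed point equations). Hence $\sev$ is the second singular value of $S$, i.e. $\sev = \|\tilde S\|_2$ for the deflated matrix $\tilde S = S - a^{1/2} (b^{1/2})^\transp = \diag(a^{-1/2})(P_* - a b^\transp)\diag(b^{-1/2})$. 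The point of this reformulation is that $P_* - a b^\transp$ has vanishing marginals and is zero precisely when $P_*$ equals the independent coupling $a b^\transp$; since $P_* = \diag(u^*) K \diag(v^*)$ is rank one iff $K$ is, this is exactly where $\rank(K) = \min(m,n) \ge 2$ enters, guaranteeing $\sev^2 > 0$ and giving hope for a quantitative bound.

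Next, to obtain a lower bound I would test $\|\tilde S\|_2$ against suitable vectors. Testing against coordinate vectors $e_j$ already yields the family $\sev^2 \ge \frac{1}{b_j}\sum_i \frac{((P_*)_{ij} - a_i b_j)^2}{a_i}$, each a weighted variance of the $j$-th column of $P_*$ around $a$; the factor $b_{\max}$ and the cap from $(P_*)_{ij}\le a_i$ (which forces this variance to be at most $b_j(1-b_j)$) are where the data quantities $b_{\max}$ and $1-b_{\max}$ should originate. To inject $\sigma_{\min}(K)$ I would instead test against a vector of the form $x = \diag(b^{1/2}/v^*)\eta$ and substitute $P_* = \diag(u^*) K \diag(v^*)$: the Rayleigh quotient then becomes one of the form $\bigl(\sum_i \tfrac{(u^*_i)^2}{a_i}(K\eta)_i^2\bigr)/\bigl(\sum_j \tfrac{b_j}{(v^*_j)^2}\eta_j^2\bigr)$, and the injectivity of $K$ (available since $m>n$ forces full column rank) gives $\|K\eta\|_2 \ge \sigma_{\min}(K)\|\eta\|_2$, producing the $\sigma_{\min}(K)^2$ in the final estimate. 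The hard part is that this quotient still contains the Sinkhorn scalings $u^*, v^*$, which are not data. The crux is to eliminate them via the two marginal equations $a = u^*\had (Kv^*)$ and $b = v^*\had(K^\transp u^*)$: the operator bound $(Kv^*)_i \le \|K\|_\infty \|v^*\|_\infty$ turns $a = u^*\had Kv^*$ into a lower bound on $u^*$ in terms of $a_{\min}$ and $\|K\|_\infty$, while $\sigma_{\min}(K)$ controls $v^*$ through $Kv^* = a/u^*$. I expect the delicate bookkeeping here to be the main obstacle: the two estimates must be combined so that only the ratio $\kappa = \|K\|_\infty/\sigma_{\min}(K)$ survives in the denominator $\kappa^2 - a_{\min}$, since a naive entrywise estimate of $u^*$ or $v^*$ would introduce spurious quantities such as $\min_{i,k} K_{ik}$ or $\|K^\transp\|_\infty$ that do not appear in $\const_1$; the remaining factors $a_{\min}$, $b_{\max}$, $1-b_{\max}$ should fall out of the extreme-entry estimates above.

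Finally, the two cases are related by transposition. The constant $\const_1$ governs $m>n$, where $K$ has full column rank and the estimate runs through $K$; the case $m<n$ follows verbatim after exchanging $a \leftrightarrow b$, $m\leftrightarrow n$ and $K\leftrightarrow K^\transp$ (using $\sigma_{\min}(K) = \sigma_{\min}(K^\transp)$), yielding $\const_2$. For $m=n$ both arguments apply and one keeps the larger bound, which explains the $\max(\const_1,\const_2)$ in the statement.
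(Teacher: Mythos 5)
Your overall framework coincides with the paper's: deflate the dominant eigenvalue, write $\sev^2$ as a Rayleigh quotient of a positive semidefinite operator similar to $M$, evaluate at a test vector, and bring in $\rank(K)=\min(m,n)$ through $\sigma_{\min}(K)$. But the decisive step is missing, and you flag it yourself as ``the main obstacle'' without resolving it: how to make the unknown scalings $u^*,v^*$ drop out of the bound. The paper's trick is to substitute $w=a^{-1/2}\had Kv^*\had z$, which (using $u^*\had Kv^*=a$) turns the deflated quotient into
$\bigl(\langle K^\transp z,\tfrac{v^*\had v^*}{b}\had K^\transp z\rangle-\langle K^\transp z,v^*\rangle^2\bigr)\big/\bigl(\langle z,\tfrac{Kv^*\had Kv^*}{a}\had z\rangle-\langle K^\transp z,v^*\rangle^2\bigr)$,
and then to choose $z$ as the minimal-norm solution of $K^\transp z=e_{j}$ with $j$ the position of the \emph{largest} entry of $v^*$. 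With this choice the numerator equals $(1/b_j-1)(v_j^*)^2$ and every term in the denominator is bounded by a multiple of $(v_j^*)^2$, via $\|z\|\le 1/\sigma_{\min}(K)$ and $(Kv^*)_i\le\|K\|_\infty v_j^*$; so $(v_j^*)^2$ cancels and only $a_{\min}$, $b_{\max}$ and $\|K\|_\infty/\sigma_{\min}(K)$ survive. Note that $\sigma_{\min}$ enters as an upper bound on the norm of the \emph{preimage} of a coordinate vector, not, as in your sketch, as a lower bound $\|K\eta\|\ge\sigma_{\min}\|\eta\|$ for a generic direction $\eta$. With your parametrization $x=\diag(b^{1/2}/v^*)\eta$ and a generic $\eta$, the quotient retains uncancelled factors such as $\min_i (u_i^*)^2/a_i$ and $\max_j b_j/(v_j^*)^2$, which cannot be expressed in terms of the data without introducing exactly the spurious quantities you worry about; as it stands your plan does not close.

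Two smaller points. First, the Rayleigh quotient you display for $x=\diag(b^{1/2}/v^*)\eta$ omits the deflation term: one has $\|\tilde S x\|^2=\sum_i\tfrac{(u_i^*)^2}{a_i}(K\eta)_i^2-\langle u^*,K\eta\rangle^2$, and dropping $-\langle u^*,K\eta\rangle^2$ makes your displayed expression an \emph{overestimate} of the test value, hence not by itself a valid lower bound for $\sev^2$. Similarly, your opening remark that the coordinate test ``yields'' the quantities $b_{\max}$ and $1-b_{\max}$ via the cap $(P_*)_{ij}\le a_i$ bounds that test value from \emph{above}, which cannot produce a lower bound on $\sev^2$. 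Second, the paper also subtracts $\langle w,a^{1/2}\rangle^2$ in the denominator (working with the projective quotient over $w$ not collinear to $a^{1/2}$ rather than with $\|\tilde S^\transp w\|^2/\|w\|^2$); this is what produces the $-a_{\min}$ in the denominator of $\const_1$. Your case bookkeeping at the end ($\const_1$ for $m>n$ via full column rank of $K$, transposition for $m<n$, the maximum for $m=n$) agrees with the paper.
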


Note that for a positive matrix $\|K\|_\infty > \sigma_{\min}(K)$. Moreover, $a_{\min} \le \frac{1}{m} \le \frac{1}{n} \le b_{\max} < 1$ if $m \ge n$, and vice versa if $m \le n$. Hence $\const_{K,a,b}$ is indeed smaller than one, which is in line with the bound $\sev^2 \le \Lambda(K)^2$.

\begin{proof}
We consider the case $m \ge n$. Instead of matrix $M$ we consider the positive semidefinite matrix
\[
H = \diag \left(\frac{u^*}{a^{1/2}} \right) K \diag \left(\frac{v^* \had v^*}{b}\right) K^\transp \diag \left(\frac{u^*}{a^{1/2}} \right)  \in \reell^{m \times m},
\]
which is obtained from $M$ by a similarity transformation (and using~\eqref{eq: problem P}). Since the dominant eigenvector of $H$ (with eigenvalue one) is $a^{1/2}$, we have
\[
\sev^2 = \max \left\{ \frac{ \langle w, H w \rangle}{\langle w, w \rangle} \colon \langle w, a^{1/2} \rangle = 0 \right\}.
\]
By projecting on the orthogonal complement of $a^{1/2}$, and noting that $\| a^{1/2} \|^2 = 1$, we first rewrite this as
\[
\sev^2 = \max \frac{ \langle w, H w \rangle - \langle w, a^{1/2} \rangle^2}{\langle w, w \rangle - \langle w, a^{1/2} \rangle^2},
\]
where the maximum is taken over all $w$ that are not collinear to $a^{1/2}$. For such $w$ the numerator is always nonnegative and the denominator is positive. Next we substitute
\[
w = a^{-1/2} \had Kv^* \had z
\]
with a new variable $z$. This yields
\[
\sev^2 = \max \frac{ \langle K^\transp z, \frac{v^* \had v^*}{b} \had K^\transp z \rangle - \langle K^\transp z, v^* \rangle^2}{\left\langle z, \frac{Kv^* \had Kv^*}{a} \had z \right\rangle - \langle K^\transp z, v^* \rangle^2},
\]
where the maximum is taken over all $z$ not collinear with $u^*$ (the numerator is then nonnegative and the denominator is positive). To obtain a lower bound, we now evaluate the expression at $z$ satisfying
\[
K^\transp z = e_j
\]
where $e_j$ denotes the $j$-th unit vector. Note that such $z$ exists ($K^\transp$ has full row rank) and is indeed not collinear to $u^*$, since otherwise $K^\transp u^*$ would be collinear with $e_j$, which contradicts $K^\transp u^* \had v^* = b$. Therefore, using this $z$, we get
\[
\sev^2 \ge \frac{\left(\frac{1}{b_{\max}} - 1 \right) (v^*)_j^2 }{ \left\langle z, \frac{Kv^* \had Kv^*}{a} \had z \right\rangle - (v^*)_j^2}.
\]
We can choose $j$ as the position of a largest entry of the vector $v^*$. Then in the denominator
\[
\left\langle z, \frac{Kv^* \had Kv^*}{a} \had z \right\rangle \le \max_i \frac{(Kv^*)_i^2}{a_i} \| z \|^2  \le \frac{ \| K \|_\infty^2 (v^*)_j^2}{a_{\min}} \frac{1}{\sigma_{\min}(K)^2}. 
\]
This leads to the asserted lower bound $\sev^2 \ge \delta_1$.

When $m \le n$, we can simply interchange the roles of $K$ and $K^\transp$, $a$ and $b$, as well as $u^*$ and $v^*$ in this proof to obtain $\sev^2 \ge \const_2$.
\end{proof}

Taken together, Theorem~\ref{thm: global convergence} and Theorem~\ref{thm: estimate for mu} result in Theorem~\ref{thm: guaranteed acceleration}.

\section{Numerical illustration}\label{sec: numerical experiments}

We illustrate the effect of overrelaxation by two numerical experiments related to optimal transport. The first is motivated by an application to color transfer between images~\cite{Ferradans2014}. The matrix $K = K_\varepsilon$ is generated as
\[
K_{ij} = \exp\left(- \frac{ \| x_i  - y_j \|^2}{\varepsilon}\right)
\]
where $x_i, y_j \in \reell^3$ are RGB values (scaled to $[0,1]$) of $m = n = 1000$ randomly sampled pixels in two different color images, respectively.\footnote{The setup follows the \emph{OT for image color adaptation} example from the Python Optimal Transport toolbox~\cite{Flamary2021pot}. The used images \texttt{ocean\_day.jpg} and \texttt{ocean\_sunset.jpg} are contained in the toolbox.} The vectors $a$ and $b$ are chosen as uniform distributions, i.e.~$a = \ones_m/m$ and $b = \ones_n/n$. We choose $\varepsilon = 0.01$. In this scenario the standard Sinkhorn method is reasonably fast, but still can be accelerated using overrelaxation. A typical outcome for different relaxation strategies is shown in Fig.~\ref{fig:plot} left, where we plot for 500 iterations the $\ell_1$-distance $\| P_\ell - P_* \|_1$ between the matrices $P_\ell = \diag(u_\ell) K \diag(v_\ell)$ and a numerical reference solution $P_* = \diag(u^*) K \diag(v^*)$. This error corresponds to the total variation distance of the corresponding transport plan. Even if this quantity (specifically $P_*$) is not available in a practical computation it is a natural measure for the convergence of the method. Besides the standard Sinkhorn method ($\omega = 1$), we run the method with a fixed relaxation $\omega = 1.5$, and with the `optimal' relaxation $\omega^{\text{opt}}$, which is computed via formula~\eqref{eq: omegaopt} from the second largest singular value $\sev$ of matrix $\diag(1/a^{1/2}) P_* \diag(1/b^{1/2})$ (then $\sev^2$ is the second largest eigenvalue of~\eqref{eq: matrix M}). We do not consider relaxation based on the lower bound on $\sev^2$ in Theorem~\ref{thm: estimate for mu}, since the resulting $\omega$ is too close to one. In all variants of the algorithm the same (uniformly) random starting vectors $u_0$ and $v_0$ are used.

\begin{figure}
    \centering
    \includegraphics[width = .99\textwidth]{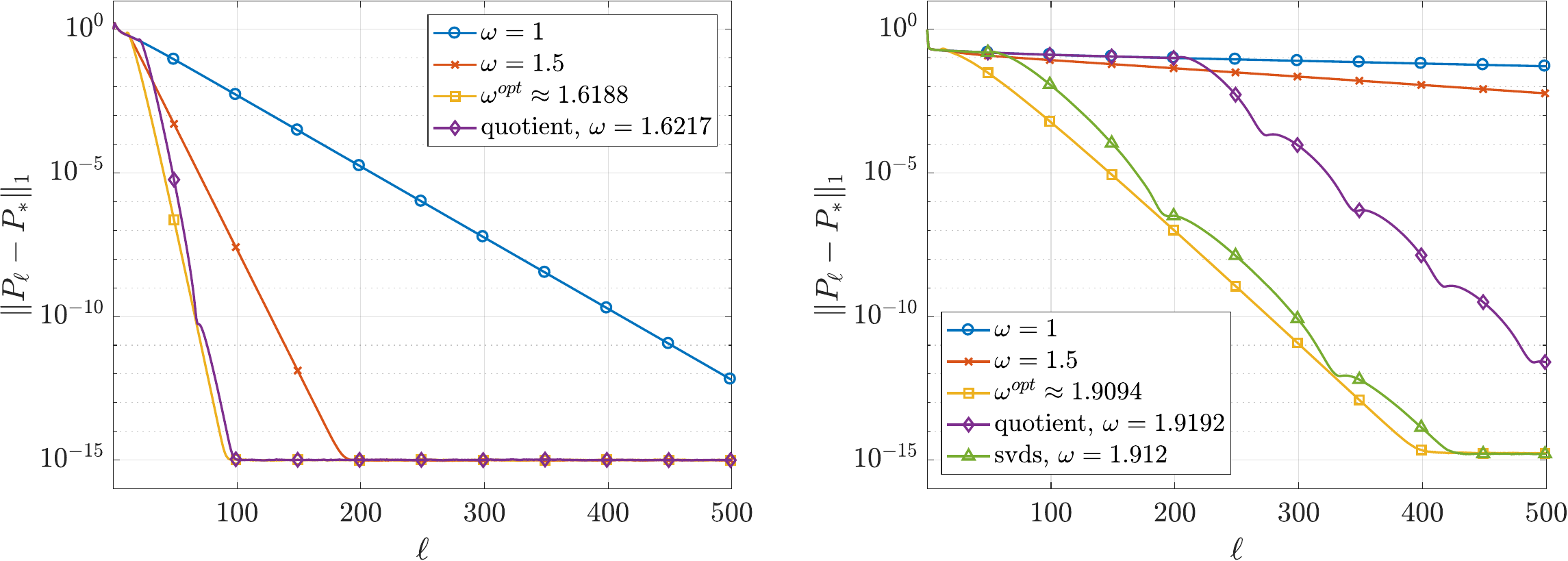}
    \caption{Effect of different relaxation strategies in two examples.}
    \label{fig:plot}
\end{figure}

As can be seen, using $\omega^{\text{opt}}$ significantly accelerates the convergence speed. Moreover, although $\omega^{\text{opt}}$ only provides the optimal local rate, the positive effect shows quite immediately. However, the value of $\omega^{\text{opt}}$ is a priori unknown in practice. Therefore we also tested a simple heuristic, similar to one suggested in~\cite{thibault17}. It is known that the convergence of the Sinkhorn method can be monitored, e.g., through the error $\| P_{\ell} \ones_n - a \|_1$; cf.~\cite[Remark~4.14]{peyre_cuturi2018}. Therefore, since $\vartheta^2$ equals the asymptotic convergence rate of the standard Sinkhorn method, we may take 
\[
\hat \vartheta^2 = \sqrt{\frac{\| P_{\ell} \ones_n - a \|_1}{\| P_{\ell-2} \ones_n - a \|_1}}
\]
as a current approximation for $\vartheta^2$. In the purple curve (diamond markers) in Fig.~\ref{fig:plot} left, we updated $\omega$ a single time after 20 steps of the standard method based on this quantity, and using formula~\eqref{eq: omegaopt}. This comes at almost no additional cost, but yields the near optimal rate in this example. Of course such a heuristic could be applied in a more systematic way, e.g., by monitoring the changes of $\left(\frac{\| P_{\ell} \ones_n - a \|_1}{\| P_{\ell-p} \ones_n - a \|_1} \right)^{1/p}$ for a suitable value of $p$ over several iterations. We note that adapting $\omega$ in (linear and nonlinear) SOR methods based on currently observed convergence rates is a classical idea and has been proposed,~e.g., in~\cite{Young71} or~\cite{Hageman1975}.

As a second example we consider a 1D transport problem between two random measures $a$ and $b$ (generated from a uniform distribution) on an equidistant grid in $[0,1]$, and with $\ell_1$-norm as a cost. The matrix $K$ in this case is given as
\[
K_{ij} = \exp\left(- \frac{\abs{\frac{i}{m-1} - \frac{j}{n-1}}}{\varepsilon}\right).
\]
Again we choose $m=n = 1000$ and $\varepsilon = 0.01$, and then compare different relaxation strategies, but starting from the same random intitialization $(u_0,v_0)$. As can be seen in Fig.~\ref{fig:plot} right, which shows 500 iterations with different relaxation strategies, this problems seems to be more difficult and the standard Sinkhorn method is extremely slow. A suitable relaxation compensates this and restores fast convergence, however, as illustrated by the slow convergence of the curve for $\omega = 1.5$, the estimation of $\omega^{\text{opt}}$, and hence of $\sev^2$, needs to be rather precise. Since here the convergence rate of the standard method stabilizes later, we apply the above heuristic of estimating $\sev^2$ only after 200 iterations of the standard iteration, resulting in the purple curve (diamond markers). The oscillatory behavior occurs because $\omega$ is estimated larger than $\omega^{\text{opt}}$, in which case the spectral radius $\omega - 1$ of the linearized iteration matrix $M_\omega$ in~\eqref{eq: iteration matrix} is achieved at complex eigenvalues. It is possible in this example to update $\omega$ earlier using computationally more expensive heuristics. For instance, the green curve (triangle markers) is obtained by computing after 50 iterations of the standard method an approximation of $\vartheta$ as the second largest singular value of the matrix $\diag(1/a^{1/2}_\ell) P_\ell \diag(1/b^{1/2}_\ell)$, where $a_\ell = u_\ell \had Kv_\ell$ and $b_\ell = v_\ell \had K^\transp u_\ell$. This could be done iteratively, we used the Matlab function \texttt{svds}. This results in an almost optimal convergence rate in this example. Of course, several similar strategies could be devised.

{\small

}

\end{document}